\numberwithin{equation}{section}
\newtheorem{theorem}{Theorem}[section]
\newtheorem{lemma}{Lemma}[section]
\newtheorem{example}{Example}[section]
\begin{document}

\title{The Newton-Shamanskii method for solving a quadratic matrix equation arising in quasi-birth-death problems}

\author{Pei-Chang Guo \thanks{ e-mail: guopeichang@pku.edu.cn}\\
School of Mathematical Sciences, Peking University, Beijing 100871, Beijing, China}

\date{}

\maketitle
\begin{abstract}
In order to determine the stationary distribution for discrete time
quasi-birth-death Markov chains, it is necessary to find the minimal nonnegative solution of a quadratic matrix equation. We apply the Newton-Shamanskii method for solving the equation. We show that the sequence of matrices generated by the Newton-Shamanskii method is monotonically increasing and converges to the minimal nonnegative solution of the equation. Numerical experiments show the effectiveness of our method.

\vspace{2mm} \noindent \textbf{Keywords}: quadratic matrix equation, quasi-birth-death problems, Newton-Shamanskii method, minimal nonnegative solution.
\end{abstract}

\section{Introduction}

We first introduce some necessary notation for the paper. For any matrices $B=[b_{ij}]\in \mathbb{R}^{m\times n}$, we write $B\geq 0 (B>0)$ if $b_{ij} \geq 0 (b_{ij}> 0)$ holds for all $i,j$. For any matrices $A,B \in \mathbb{R}^{n\times n}$, we write $A\geq B (A > B)$ if $a_{ij}\geq b_{ij}(a_{ij} > b_{ij})$ for all $i, j$. For any vectors $x,y \in \mathbb{R}^n$ ,we write $x\geq y (x>y)$ if $x_i\geq y_i (x_i>y_i)$ holds for all $i=1, \cdots,n$. The vector of all ones is denoted by e, i.e., $e=(1, 1, \cdots, 1)^T$. The identity matrix is denoted by $I$.

In this paper, we consider the quadratic matrix equation (QME)
\begin{equation}\label{qme}
    \mathcal{Q}(X)=AX^2+BX+C=0,
\end{equation}
where $A, B, C, X \in \mathbb{R}^{n\times n}$, $A, B+I, C \geq 0$, $A+B+I+C$ is irreducible and $(A+B+C)e=e$.

The quadratic matrix equation (\ref{qme}) has applications in quasi-birth-death processes (QBD)\cite{qbini}. The elementwise minimal nonnegative solution $S$ of the equation (\ref{qme}) is of interest in the applications. The rate $\rho$ of a QBD Markov chain is defined by
\begin{equation}\label{qrho}
    \rho= p^T(B+I+2A)e
\end{equation}
where $p$ is the stationary probability
vector of stochastic matrix $A+B+I+C$, i.e., $p^T(A+B+I+C)=p^T$ and $p^Te=1$. We refer the readers to the monograph \cite{qbini} for the details. We recall that a QBD is positive recurrent if $\rho <1$, null recurrent if $\rho =1$ and transient if $\rho > 1$. Throughout this paper, we always assume that the QBD is positive recurrent.

 There have been several numerical methods for solving the equation.
 Some linearly convergent fixed point iterations are analyzed in \cite{qlinear}
and the references therein. In \cite{qnewton} Latouche showed that Newton's algorithm for this equation is well defined and the matrix sequences is  monotonically increasing and quadratically convergent. The invariant subspaces method approximates the minimal nonnegative solution $S$ quadratically by approximating the left invariant subspace of a suitable block companion matrix \cite{qbini,crqbd1}. Latouche and Ramaswami propose a logarithmic reduction algorithm based on a divide-and conquer strategy in \cite{qlrqbd}, producing sequences of approximations which converge quadratically to $S$.
Bini and Meini et al. devise a quadratically convergent and
numerically stable algorithm \cite{crqbd1,crqbd2,crqbd3,crqbd4,qbd} for the computation of $S$ based on a functional representation
of cyclic reduction, which applies to general M/G/1 type Markov chains \cite{stoch}
and which extends the method of Latouche and Ramaswami \cite{qlrqbd}.
%

In this paper, we apply the Newton-Shamanskii method to equation \eqref{qme}. We show that, starting with a suitable initial guess, the sequence of the iterative matrices generated by the Newton-Shamanskii method is monotonically increasing and converges to the minimal nonnegative solution of QME \eqref{qme}. Numerical experiments show that the Newton-Shamanskii method is effective and outperforms the Newton method.

The rest of the paper is organized as follows. In section 2 we recall Newton's method and present the Newton-Shamanskii iterative procedure. Insection3 we prove the monotone convergence result for the Newton-Shamanskii method. In section 4 we present some
numerical examples, which show that our new algorithm is faster
than Newton method. In section 5, we give our conclusions.

\section{Newton-Shamanskii Method}

The function $\mathcal{Q}$ in \eqref{qme} is a mapping from $\mathbb{R}^{n\times n}$ into
itself and the Fr\'{e}chet derivative of $\mathcal{Q}$ at $x$ is a linear map $\mathcal{Q}^{'}_X:  \mathbb{R}^{n\times n}\rightarrow \mathbb{R}^{n\times n}$ given by
\begin{equation}\label{qfdao}
   \mathcal{Q}^{'}_X(Z)= AZX+AXZ+BZ.
\end{equation}
The second derivative at $X$, $\mathcal{Q}^{''}_X : \mathbb{R}^{n\times n}\rightarrow \mathbb{R}^{n\times n}$, is given by
\begin{equation}\label{q2fdao}
    \mathcal{Q}^{''}_X(Z_1,Z_2)=AZ_1Z_2+AZ_2Z_1.
\end{equation}

For a given $X_0$, the Newton sequence for the solution of $\mathcal{Q}(X) = 0$ is
\begin{equation}\label{qnewit}
 X_{k+1}=X_k-(\mathcal{Q}^{'}_{X_k})^{-1}\mathcal{Q}({X_k}) ,\quad k=0,1 ,2, \cdots,
\end{equation}
 provided that $\mathcal{Q}^{'}_{X_k}$ is invertible for all $k$. By equation (\ref{qfdao}), the Newton iteration (\ref{qnewit}) is equivalent to
\begin{eqnarray}\label{qnewz}
                  \left\{  \begin{array}{c}
                 AZX _k+(AX_k+B)Z=-Q(X_k),\\
                  X_{k+1}=X_k+Z , \quad k=0,1 ,2, \cdots
                  \end{array}\right.
  \end{eqnarray}
    or
    \begin{equation}\label{qnewx}
        AX_{k+1}X_k+(AX_k+B)X_{k+1}=AX_k^2-C, \quad k=0,1 ,2, \cdots.
    \end{equation}

As we see in \cite{qnewton}, for the nonlinear equation $\mathcal{Q}(X)=0$ with the minimal nonnegative solution $S$, the sequence generated by Newton method will converge quadratically and globally to the the solution $S$. However, there is a disadvantage with Newton method. At every iteration step, a Sylvester-type equation
\begin{equation*}
     A_1XB_1^T+A_2XB_2^T=E.
\end{equation*}
is needed to solve. The Bartels-Stewart method and the Hessenberg-Schur method can be employed to solve the Sylvester-type equation \cite{laub}, where the QZ algorithm is involved. When solving the Sylvester-type equation, a transformation method is used which employs the QZ algorithm to structure the equation in such a way that it can be solved columnwise by a back substitution technique. The work count of the floating point operations involved in QZ algorithm is large compared with the back substitution \cite{laub}. In order to save the overall cost, we would like to reuse the special coefficient matrix structure form produced by QZ algorithm. We present the Newton-Shamanskii algorithm for QME(\ref{qme}) as follows.

\vspace{3mm}

\textbf{Newton-Shamanskii algorithm for QME(\ref{qme})}\\

Given initial value $X_{0}$, for $k=0,1,\cdots$
\begin{eqnarray}
\label{qlikea}   X_{k,0}&=&X_k-(\mathcal{Q}^{'}_{X_k})^{-1}\mathcal{Q}({X_k}) ,\\
\label{qlikeb}  X_{k,s} &=& X_{k,s-1}-(\mathcal{Q}^{'}_{X_k})^{-1}\mathcal{Q}({X_{k,s-1}}) , \quad s=1,2,\cdots , n_k, \\
 \label{qlikec} X_{k+1} &=& X_{k,n_k}
\end{eqnarray}

\section{Convergence Analysis}
In this section, we prove a monotone convergence result for Newton-Shamanskii method for QME \eqref{qme}.
\subsection{preliminary}
We first recall that a real square matrix $A$ is called a Z-matrix if
all its off-diagonal elements are nonpositive. Note that any Z-matrix A can be
written as $sI-B$ with $B \geq 0$. A Z-matrix $A$ is called an M-matrix if $s\geq \rho(B)$,
where $\rho(\cdot)$ is the spectral radius; it is a singular M-matrix if $s=\rho(B)$ and a
nonsingular M-matrix if $s>\rho(B)$.
We will make use of the following result (see \cite{varga}).
\begin{lemma}\label{qyubei1}
For a Z-matrix $A$, the following are equivalent:
\begin{itemize}
  \item [$(a)$] $A$ is a nonsingular M-matrix.
  \item [$(b)$] $A^{-1}\geq 0$ .
  \item [$(c)$] $Av>0$ for some vector $v>0$.
  \item [$(d)$] All eigenvalues of $A$ have positive real parts.
\end{itemize}
\end{lemma}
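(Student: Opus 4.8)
The plan is to prove the four statements equivalent by closing the cyclic chain $(a)\Rightarrow(b)\Rightarrow(c)\Rightarrow(a)$ and then treating $(a)\Leftrightarrow(d)$ separately. Throughout I would exploit the standing representation of the Z-matrix as $A=sI-B$ with $B\geq 0$, so that ``$A$ is a nonsingular M-matrix'' means precisely $s>\rho(B)$, and I would lean on the Perron--Frobenius theorem for the nonnegative matrix $B$: namely that $\rho(B)$ is itself an eigenvalue of $B$, with an associated nonnegative (left) eigenvector.

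For $(a)\Rightarrow(b)$ I would use the Neumann series. Since $s>\rho(B)$ we have $\rho(B/s)<1$, so $A^{-1}=s^{-1}\sum_{k\geq 0}(B/s)^k$ converges; every summand is nonnegative because $B\geq 0$, whence $A^{-1}\geq 0$. For $(b)\Rightarrow(c)$ I would pick any $w>0$ (for instance $w=e$) and set $v=A^{-1}w$, so that $Av=w>0$ immediately. The only delicate point is to upgrade $v\geq 0$ to the strict $v>0$ required by $(c)$: because $A^{-1}$ is invertible no row of $A^{-1}$ can vanish identically, and together with $A^{-1}\geq 0$ and $w>0$ this forces each component $v_i=\sum_j (A^{-1})_{ij}w_j$ to contain a strictly positive term, hence $v>0$.

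For $(c)\Rightarrow(a)$ I would take the given $v>0$ with $Av=sv-Bv>0$ and test it against a nonnegative left Perron eigenvector $u\neq 0$ satisfying $u^TB=\rho(B)u^T$; multiplying on the left yields $(s-\rho(B))\,u^Tv=u^T(Av)>0$, and since $u^Tv>0$ (as $u\geq 0$, $u\neq0$, $v>0$) this gives $s>\rho(B)$, i.e. $(a)$. Finally, for $(a)\Leftrightarrow(d)$ I would use that the eigenvalues of $A$ are exactly $s-\lambda$ as $\lambda$ ranges over the spectrum of $B$: if $s>\rho(B)$ then $\operatorname{Re}(s-\lambda)\geq s-|\lambda|\geq s-\rho(B)>0$ for every such $\lambda$, giving $(d)$; conversely Perron--Frobenius produces the genuine real eigenvalue $s-\rho(B)$ of $A$, and if all eigenvalues have positive real part then this one in particular is positive, forcing $s>\rho(B)$.

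I expect the main obstacle to be the strictness bookkeeping in $(b)\Rightarrow(c)$ --- recognizing that nonsingularity of the nonnegative matrix $A^{-1}$ is exactly what promotes $v\geq 0$ to $v>0$ --- together with the correct invocation of Perron--Frobenius (that $\rho(B)$ is attained as an eigenvalue with a nonnegative eigenvector) in the remaining directions; the Neumann-series step is entirely routine by comparison.
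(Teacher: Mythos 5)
Your proof is correct, but note that the paper itself offers no proof of this lemma at all: it is stated as a known result and attributed to Varga's \emph{Matrix Iterative Analysis}, so there is no in-paper argument to compare against. Your cyclic chain $(a)\Rightarrow(b)\Rightarrow(c)\Rightarrow(a)$ via the Neumann series, the $v=A^{-1}e$ trick, and the left Perron eigenvector test, together with the separate spectral-shift argument for $(a)\Leftrightarrow(d)$, is exactly the standard textbook route, and the two delicate points you flag are handled properly: the promotion of $v\geq 0$ to $v>0$ does follow from the fact that an invertible nonnegative matrix has no zero row, and the Perron--Frobenius input you use (that $\rho(B)$ is an eigenvalue of the nonnegative matrix $B$ with a nonnegative left eigenvector) holds without any irreducibility hypothesis, which matters since $B$ here need not be irreducible. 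The only unstated triviality is that $s>\rho(B)\geq 0$ guarantees $s>0$ before you divide by $s$ in the Neumann series. In short, the proposal correctly supplies a proof the paper delegates to the literature.
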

The next result is also well known and also can be found in \cite{varga}.
\begin{lemma}\label{qyubei2}
Let $A$ be a nonsingular M-matrix. If $B \geq A$ is a Z-matrix, then $B$ is also  nonsingular M-matrix . Moreover, $B^{-1}\leq A^{-1}$.
\end{lemma}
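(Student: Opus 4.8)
The plan is to reduce everything to the equivalences just established in Lemma~\ref{qyubei1}. First I would invoke characterization $(c)$ for the nonsingular M-matrix $A$: there exists a vector $v>0$ with $Av>0$. The decisive observation is that the \emph{same} vector $v$ witnesses the M-matrix property for $B$. Since $B\geq A$ entrywise, the matrix $B-A$ is nonnegative, and because $v>0$ we have $(B-A)v\geq 0$; hence
\[
Bv = Av + (B-A)v \geq Av > 0 .
\]
As $B$ is assumed to be a Z-matrix and now satisfies $Bv>0$ for a strictly positive $v$, the implication $(c)\Rightarrow(a)$ of Lemma~\ref{qyubei1} immediately gives that $B$ is a nonsingular M-matrix. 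This settles the first assertion with essentially no computation.

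For the monotonicity $B^{-1}\leq A^{-1}$, the key preliminary is that both inverses are now known to be nonnegative: by part $(b)$ of Lemma~\ref{qyubei1} we have $A^{-1}\geq 0$ and $B^{-1}\geq 0$. The second step is the resolvent-type identity
\[
A^{-1}-B^{-1} = A^{-1}(B-A)B^{-1},
\]
which follows at once by expanding $A^{-1}(B-A)B^{-1}=A^{-1}BB^{-1}-A^{-1}AB^{-1}$. The right-hand side is a product of three nonnegative matrices, namely $A^{-1}\geq 0$, $B-A\geq 0$, and $B^{-1}\geq 0$, and is therefore itself nonnegative. Consequently $A^{-1}-B^{-1}\geq 0$, which is exactly the claim $B^{-1}\leq A^{-1}$.

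I do not expect a genuine obstacle here, since the whole argument is a direct application of the characterizations of nonsingular M-matrices in Lemma~\ref{qyubei1}. The only point requiring a moment's care is writing down the algebraic identity for $A^{-1}-B^{-1}$ correctly; once it is in place, the sign conclusion is immediate from closure of the nonnegative matrices under multiplication. An alternative route for the first part would go through characterization $(d)$ on the eigenvalues, but the positive-vector characterization $(c)$ is cleaner because the witnessing vector $v$ transfers verbatim from $A$ to $B$.
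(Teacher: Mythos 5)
Your argument is correct. Note that the paper itself gives no proof of this lemma --- it simply cites it as a well-known fact from Varga's book --- so there is no in-paper argument to compare against; what you have written is the standard textbook derivation and fills the gap cleanly. Both halves check out: the positive vector $v$ with $Av>0$ from characterization $(c)$ of Lemma~\ref{qyubei1} does transfer to $B$ via $Bv=Av+(B-A)v\geq Av>0$, and the identity $A^{-1}-B^{-1}=A^{-1}(B-A)B^{-1}$ together with $A^{-1},B^{-1},B-A\geq 0$ (the first two by part $(b)$ of Lemma~\ref{qyubei1}) gives the inverse monotonicity. No gaps.
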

We recall the property of the minimal nonnegative solution $S$ for QME \eqref{qme}, see \cite{qnewton,qbini} for more details.
\begin{theorem}\label{qthm0}
If the quasi-birth-death process is positive recurrent, i.e., rate $\rho$ defined by \eqref{qrho} satisfies that $\rho <1$, then the matrix
\begin{equation*}
    -[(S^T\otimes A+I\otimes AS)+I\otimes B]
\end{equation*}
is a nonsingular M-matrix.
\end{theorem}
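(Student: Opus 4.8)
The plan is to recognize the displayed matrix as the vectorization of the negative Fr\'echet derivative of $\mathcal{Q}$ at $S$ and then to verify the two defining properties of a nonsingular M-matrix with Lemma \ref{qyubei1}. Writing $\mathcal{L}:=\mathcal{Q}'_S$, formula \eqref{qfdao} gives $\mathcal{L}(Z)=AZS+(AS+B)Z$, whose matrix in the column-stacking vectorization is $S^T\otimes A+I\otimes(AS+B)=S^T\otimes A+I\otimes AS+I\otimes B$; thus the matrix in the statement is $M:=-[S^T\otimes A+I\otimes AS+I\otimes B]$. First I would check that $M$ is a Z-matrix: from $S\ge 0,\ A\ge 0$ the blocks $S^T\otimes A$ and $I\otimes AS$ are nonnegative, and from $B+I\ge 0$ the off-diagonal entries of $B$, hence of $I\otimes B$, are nonnegative, so every off-diagonal entry of $M$ is nonpositive.

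The substance is nonsingularity, and this is where positive recurrence must enter. I would set $U:=AS+B+I$, which is nonnegative since $AS\ge 0$ and $B+I\ge 0$, so that $AS+B=U-I$ and
\begin{equation*}
M=I\otimes I-K,\qquad K:=S^T\otimes A+I\otimes U\ge 0 .
\end{equation*}
By Lemma \ref{qyubei1}, $M$ is a nonsingular M-matrix iff some strictly positive vector $v$ has $Mv>0$; passing to the transpose (a Z-matrix with the same spectrum), it is equivalent to find $w>0$ with $K^Tw<w$, which forces $\rho(K)<1$. Positive recurrence makes $S$ stochastic, $Se=e$, which invites the ansatz $w=e\otimes z$ with $z>0$ to be determined. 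Then, using $Se=e$, $K^T(e\otimes z)=(Se)\otimes(A^Tz)+e\otimes(U^Tz)=e\otimes\bigl((A+U)^Tz\bigr)$, so the inequality $K^Tw<w$ collapses to the $n$-dimensional inequality $(A+U)^Tz<z$. Everything thus reduces to the spectral bound $\rho(A+U)<1$, i.e. to $I-(A+U)$ being a nonsingular M-matrix.

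This spectral bound is the crux, and the main obstacle. Since $A+B+I+C$ is stochastic and $Se=e$, one has $(A+U)e=e+(A-C)e$, whose entries may exceed $1$, so $z=e$ will not work and the global drift hypothesis is genuinely needed. To establish it I would factor $I-(A+U)=(I-U)-A=(I-U)\bigl(I-(I-U)^{-1}A\bigr)$ and use two standard facts of QBD theory (see \cite{qbini}): for our chain $I-U$ is a nonsingular M-matrix with $(I-U)^{-1}=\sum_{k\ge 0}U^k\ge 0$, and the matrix $R:=A(I-U)^{-1}$ has $\rho(R)<1$ exactly under positive recurrence, i.e. when the rate of \eqref{qrho} satisfies $\rho=p^T(B+I+2A)e=1+p^T(A-C)e<1$. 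Because $\rho\bigl((I-U)^{-1}A\bigr)=\rho(R)<1$, the matrix $I-(I-U)^{-1}A$ is a nonsingular M-matrix, so $\bigl(I-(A+U)\bigr)^{-1}=\bigl(I-(I-U)^{-1}A\bigr)^{-1}(I-U)^{-1}\ge 0$; as $I-(A+U)$ is a Z-matrix, Lemma \ref{qyubei1} identifies it (and its transpose) as a nonsingular M-matrix and supplies $z>0$ with $(A+U)^Tz<z$. Unwinding the reduction yields $w=e\otimes z>0$ with $K^Tw<w$, hence $\rho(K)<1$ and, by Lemma \ref{qyubei1}, $M=I\otimes I-K$ is a nonsingular M-matrix. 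As a check, the scalar case (where stochasticity gives $b=-(a+c)$) factors as $(ax-c)(x-1)=0$ with $S=1$ and $\rho(A+U)=1+(a-c)$, which is below $1$ exactly when $a<c$, i.e. in the positive-recurrent regime $\rho<1$, and degenerates to an equality (a singular M-matrix) in the null-recurrent case $\rho=1$.
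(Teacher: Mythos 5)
The paper does not actually prove Theorem \ref{qthm0}: it is stated as a recalled property of $S$, with a pointer to \cite{qnewton,qbini}, so there is no internal proof to compare against. Your argument is correct and supplies what the paper omits. The identification of the matrix as the negative vectorized Fr\'echet derivative at $S$, the Z-matrix check, the splitting $M=I-K$ with $K=S^T\otimes A+I\otimes U$ and $U=AS+B+I$, the collapse of $K^T(e\otimes z)$ to $e\otimes\bigl((A+U)^Tz\bigr)$ via $Se=e$, and the factorization $(I-U)-A=(I-U)\bigl(I-(I-U)^{-1}A\bigr)$ all check out, as does the identity $\rho=1+p^T(A-C)e$. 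Be aware that the probabilistic substance is entirely outsourced to three imported facts --- $Se=e$ under recurrence, $(I-U)^{-1}=\sum_{k\ge0}U^k\ge0$, and $\rho\bigl(A(I-U)^{-1}\bigr)<1$ precisely under positive recurrence --- which is legitimate here (these are proved independently of the present statement in \cite{qbini}), but they are the load-bearing inputs and should be flagged as such. A slightly shorter route to the same conclusion, closer to Latouche's original argument in \cite{qnewton}, is to write $M=I\otimes(I-U)-S^T\otimes A$ directly and note that $\bigl(I\otimes(I-U)^{-1}\bigr)M=I-S^T\otimes\bigl((I-U)^{-1}A\bigr)$, where the subtracted nonnegative matrix has spectral radius $\rho(S)\,\rho(R)=\rho(R)<1$; this yields $M^{-1}\ge0$ at once and makes your detour through $\rho(A+U)<1$ and the $e\otimes z$ ansatz unnecessary, though that detour is harmless and your scalar sanity check is a nice touch.
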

\subsection{Monotone convergence}
The next lemma displays the monotone convergence properties of Newton iteration for QME \eqref{qme}.  \begin{lemma}\label{qlemm1}
Suppose that a matrix $X$ is such that
\begin{itemize}
  \item [(i)] $\mathcal{Q}(X)\geq 0$,
  \item [(ii)] $0\leq X\leq S$,
  \item [(iii)]$-[(X^T\otimes A+I\otimes AX)+I\otimes B]$ is a nonsingular M-matrix.
\end{itemize}
Then there exists the matrix
\begin{equation}\label{qzheng1}
    Y=X-(\mathcal{Q}^{'}_{X})^{-1}\mathcal{Q}(X)
\end{equation}
such that
\begin{itemize}
  \item [(a)] $\mathcal{Q}(Y)\geq 0$,
  \item [(b)] $0\leq X \leq Y\leq S$,
  \item [(c)]$-[(Y^T\otimes A+I\otimes AY)+I\otimes B]$ is a nonsingular M-matrix.
\end{itemize}
\end{lemma}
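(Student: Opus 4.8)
The plan is to exploit that $\mathcal{Q}$ is quadratic, so its Taylor expansion about $X$ terminates exactly at second order, and to read every nonnegativity statement through the Kronecker-product form of $\mathcal{Q}'_X$, whose matrix is $M_X := X^T\otimes A + I\otimes AX + I\otimes B$ (so that $\mathrm{vec}(\mathcal{Q}'_X(Z)) = M_X\,\mathrm{vec}(Z)$). Condition (iii) says precisely that $-M_X$ is a nonsingular M-matrix, so by Lemma \ref{qyubei1} the operator $\mathcal{Q}'_X$ is invertible with $(-M_X)^{-1}=-M_X^{-1}\ge 0$; this guarantees that $Y$ in \eqref{qzheng1} is well defined and is the source of every inequality below.

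First I would establish the lower bound $X\le Y$. Writing $Y-X = -(\mathcal{Q}'_X)^{-1}\mathcal{Q}(X)$ in vectorized form gives $\mathrm{vec}(Y-X) = (-M_X)^{-1}\,\mathrm{vec}(\mathcal{Q}(X))$, and since $(-M_X)^{-1}\ge 0$ and $\mathcal{Q}(X)\ge 0$ by (i), this yields $Y\ge X\ge 0$. For (a) I would use the exact second-order expansion $\mathcal{Q}(Y)=\mathcal{Q}(X)+\mathcal{Q}'_X(Y-X)+A(Y-X)^2$, where the coefficient $A$ of the quadratic remainder comes from \eqref{q2fdao}. Because the Newton step forces $\mathcal{Q}(X)+\mathcal{Q}'_X(Y-X)=0$, this collapses to $\mathcal{Q}(Y)=A(Y-X)^2$, which is $\ge 0$ since $A\ge 0$ and $Y-X\ge 0$.

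Next, for the upper bound $Y\le S$, I would apply the same exact expansion at $S$: since $\mathcal{Q}(S)=0$, we have $0=\mathcal{Q}(X)+\mathcal{Q}'_X(S-X)+A(S-X)^2$. Subtracting the Newton equation $\mathcal{Q}(X)+\mathcal{Q}'_X(Y-X)=0$ gives $\mathcal{Q}'_X(S-Y)=-A(S-X)^2$, i.e. $\mathrm{vec}(S-Y)=(-M_X)^{-1}\,\mathrm{vec}(A(S-X)^2)$. Using (ii) and $A\ge 0$, the right-hand side is $\ge 0$, so $Y\le S$; combined with the previous step this is (b).

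Finally, for (c) I would switch comparison matrices. From $Y\le S$ and $A\ge 0$ one gets $M_Y\le M_S$, hence $-M_Y\ge -M_S$, where $-M_S$ is a nonsingular M-matrix by Theorem \ref{qthm0}. The remaining task is to check that $-M_Y$ is a Z-matrix, after which Lemma \ref{qyubei2} immediately delivers the conclusion. I expect this Z-matrix check to be the one genuinely fiddly step: the off-diagonal entries of $M_Y=Y^T\otimes A+I\otimes(AY+B)$ must be shown nonnegative, which is clear for the $Y^T\otimes A$ and $AY$ contributions (both factors nonnegative), but for the $B$ block it requires writing $B=(B+I)-I$ and noting that the subtracted identity only affects diagonal entries, so the hypothesis $B+I\ge 0$ suffices off the diagonal. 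This is the only place where the precise assumptions $A,\,B+I\ge 0$, rather than $A,\,B\ge 0$, are actually used.
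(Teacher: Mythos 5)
Your proof is correct and follows essentially the same route as the paper's: the Newton step together with the nonnegativity of $-[(X^T\otimes A+I\otimes AX)+I\otimes B]^{-1}$ gives $Y\ge X$, the exactly terminating Taylor expansion gives (a), the identity $\mathcal{Q}^{'}_{X}(S-Y)=-A(S-X)^2$ (which the paper derives equivalently from the Sylvester form of the Newton equation) gives $Y\le S$, and the comparison with $-[(S^T\otimes A+I\otimes AS)+I\otimes B]$ via Lemma \ref{qyubei2} gives (c). Your explicit check that $-[(Y^T\otimes A+I\otimes AY)+I\otimes B]$ is a Z-matrix, using $A\ge 0$ and $B+I\ge 0$ so that only diagonal entries can be negative, is a detail the paper leaves implicit and is a worthwhile addition.
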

\begin{proof}
$\mathcal{Q}^{'}_{X}$ is invertible and the matrix $Y$ is well defined by (iii) and Lemma \ref{qyubei1}.
Because $\mathcal{Q}(X)\geq 0$ and $-[(X^T\otimes A+I\otimes AX)+I\otimes B]^{-1}\geq0$ by (iii) and Lemma \ref{qyubei1}, we have $vec(Y)\geq vec(X)$ and thus $Y\geq X$.
From equation (\ref{qzheng1}) and Taylor formula,
we have
   \begin{eqnarray*} \mathcal{Q}(Y)&=&\mathcal{Q}(X)+\mathcal{Q}^{'}_{X}(Y-X)+\frac{1}{2}\mathcal{Q}^{''}_X(Y-X,Y-X)\\
       \nonumber &=&\frac{1}{2}\mathcal{Q}^{''}_X(Y-X,Y-X)  \\
      \nonumber &=& A(Y-X)^2 \geq 0
     \end{eqnarray*}
We now prove (b). From the equivalent form of \eqref{qzheng1}
\begin{equation*}
   \nonumber  A Y X+(AX+B)Y = AX^2-C
   \end{equation*}
   and the equation
   \begin{equation*}
    AS^2+BS+C = 0,
   \end{equation*}
 we get
 \begin{eqnarray*}
   \nonumber   A(Y-S)X+(AX+B)(Y-S)&=& AX^2-C-ASX-AXS-BS \\
      \nonumber &=& A(X-S)(X-S)\\
      \nonumber &\geq & 0.
     \end{eqnarray*}

Note that $-[(X^T\otimes A+I\otimes AX)+I\otimes B]$ is a nonsingular M-matrix, therefore by Lemma \ref{qyubei1} we get $vec(S-Y)\geq 0$, which is $S-Y\geq 0$. Note that $Y\geq X$, we have proved (b).

From $0\leq Y \leq S$, we know $$-[(Y^T\otimes A+I\otimes AY)+I\otimes B] \geq -[(S^T\otimes A+I\otimes AS)+I\otimes B],$$ and we know $-[(S^T\otimes A+I\otimes AS)+I\otimes B]$ is a nonsingular M-matrix, so
 $-[(Y^T\otimes A+I\otimes AY)+I\otimes B]$ is a nonsingular M-matrix by Lemma \ref{qyubei2}.
\end{proof}
The next lemma is an extension of Lemma \ref{qlemm1}, which will be the theoretical basis of monotone convergence result of Newton-Shamanskii method for QME \eqref{qme}.

\begin{lemma}\label{qlemm2}
Suppose that a matrix $X$ is such that
\begin{itemize}
  \item [(i)] $\mathcal{Q}(X)\geq 0$,
  \item [(ii)] $0\leq X\leq S$,
  \item [(iii)]$-[(X^T\otimes A+I\otimes AX)+I\otimes B]$ is a nonsingular M-matrix.
\end{itemize}
Then for any matrix $N$ with $0\leq N\leq X$, there exists the matrix
\begin{equation}\label{qzheng2}
    Y=X-(\mathcal{Q}^{'}_{N})^{-1}\mathcal{Q}(X)
\end{equation}
such that
\begin{itemize}
  \item [(a)] $\mathcal{Q}(Y)\geq 0$,
  \item [(b)] $0\leq X\leq Y\leq S$,
  \item [(c)]$-[(Y^T\otimes A+I\otimes AY)+I\otimes B]$ is a nonsingular M-matrix.
\end{itemize}
\end{lemma}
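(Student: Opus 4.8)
The plan is to follow the proof of Lemma~\ref{qlemm1} step by step, carrying along the extra terms that appear because the correction in \eqref{qzheng2} uses the frozen derivative $\mathcal{Q}'_N$ instead of $\mathcal{Q}'_X$.

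First I would check that $Y$ is well defined and that $Y\geq X$. Since $0\leq N\leq X$ and $A\geq 0$, entrywise monotonicity gives $N^T\otimes A+I\otimes AN\leq X^T\otimes A+I\otimes AX$, hence
\[
-[(N^T\otimes A+I\otimes AN)+I\otimes B]\geq -[(X^T\otimes A+I\otimes AX)+I\otimes B].
\]
The right-hand matrix is a nonsingular M-matrix by (iii), and the left-hand matrix is a Z-matrix of the same structure, so Lemma~\ref{qyubei2} shows that $-[(N^T\otimes A+I\otimes AN)+I\otimes B]$ is a nonsingular M-matrix with nonnegative inverse. Thus $\mathcal{Q}'_N$ is invertible, $Y$ is well defined, and applying this nonnegative inverse to $\mathcal{Q}(X)\geq 0$ gives $vec(Y)\geq vec(X)$, i.e. $Y\geq X$.

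For (a) I would expand $\mathcal{Q}(Y)$ by Taylor's formula about $X$, but now only $\mathcal{Q}(X)+\mathcal{Q}'_N(Y-X)=0$ holds. Writing $\mathcal{Q}(Y)=\mathcal{Q}(X)+\mathcal{Q}'_X(Y-X)+A(Y-X)^2$ and subtracting $\mathcal{Q}(X)+\mathcal{Q}'_N(Y-X)=0$, the surviving discrepancy is $\mathcal{Q}'_X(Y-X)-\mathcal{Q}'_N(Y-X)=A(Y-X)(X-N)+A(X-N)(Y-X)$. Collecting terms yields
\[
\mathcal{Q}(Y)=A(Y-X)(Y-N)+A(X-N)(Y-X),
\]
and since $A\geq 0$, $Y-X\geq 0$, $X-N\geq 0$ and $Y-N\geq 0$, each summand is nonnegative, so $\mathcal{Q}(Y)\geq 0$.

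For the upper bound in (b) I would compute $\mathcal{Q}'_N(Y-S)$ from the equivalent form of \eqref{qzheng2} together with $AS^2+BS+C=0$. Eliminating $C=-AS^2-BS$ and factoring gives
\[
\mathcal{Q}'_N(Y-S)=A(X-S)(N-S)+A(N-X)(X-S).
\]
Both $X-S$ and $N-S$ are entrywise nonpositive, so their product is entrywise nonnegative, and likewise $(N-X)(X-S)\geq 0$; with $A\geq 0$ this yields $\mathcal{Q}'_N(Y-S)\geq 0$. Multiplying by the nonnegative inverse of $-[(N^T\otimes A+I\otimes AN)+I\otimes B]$ forces $vec(Y-S)\leq 0$, so $Y\leq S$; together with $0\leq X\leq Y$ this proves (b). Finally (c) is identical to Lemma~\ref{qlemm1}: from $0\leq Y\leq S$ and Theorem~\ref{qthm0}, the Z-matrix $-[(Y^T\otimes A+I\otimes AY)+I\otimes B]$ dominates a nonsingular M-matrix, so Lemma~\ref{qyubei2} applies.

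I expect the sign bookkeeping in (a) and in the upper bound of (b) to be the main obstacle. Unlike in Lemma~\ref{qlemm1}, the Newton correction no longer annihilates the first-order term, so the argument hinges on recognizing that the frozen-derivative discrepancy contributes the extra factor $X-N\geq 0$ and that the products of the two entrywise nonpositive matrices $X-S$ and $N-S$ are entrywise nonnegative. Once these factorizations and their signs are verified, the M-matrix machinery of Lemmas~\ref{qyubei1} and \ref{qyubei2} closes the argument exactly as before.
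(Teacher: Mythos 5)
Your proposal is correct, but for the key part (b) and for (c) it takes a genuinely different route from the paper. For well-definedness, the monotonicity $Y\geq X$, and part (a) you do exactly what the paper does: the identity $\mathcal{Q}(Y)=(\mathcal{Q}^{'}_{X}-\mathcal{Q}^{'}_{N})(Y-X)+A(Y-X)^2$ and your regrouping $A(Y-X)(Y-N)+A(X-N)(Y-X)$ agree with the paper's expression $A(X-N)(Y-X)+A(Y-X)(X-N)+A(Y-X)^2$ term by term. For the upper bound $Y\leq S$, however, the paper introduces the full Newton iterate $\hat{Y}=X-(\mathcal{Q}^{'}_{X})^{-1}\mathcal{Q}(X)$, observes $Y\leq\hat{Y}$ by comparing the two M-matrix inverses via Lemma \ref{qyubei2}, and then invokes $\hat{Y}\leq S$ from Lemma \ref{qlemm1}; it also deduces (c) from $\hat{Y}\geq Y$ rather than from $S\geq Y$. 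You instead generalize the direct computation inside the proof of Lemma \ref{qlemm1}: your factorization
\begin{equation*}
\mathcal{Q}^{'}_{N}(Y-S)=A(X-S)(N-S)+A(N-X)(X-S)
\end{equation*}
checks out (expand both sides; each equals $AXN-ASN+ANX-ANS-AX^2+AS^2$), the sign analysis of the products of nonpositive factors is right, and applying the nonnegative inverse of $-[(N^T\otimes A+I\otimes AN)+I\otimes B]$ gives $Y\leq S$; part (c) then follows by comparison with $S$ and Theorem \ref{qthm0} exactly as in Lemma \ref{qlemm1}. The paper's sandwich argument $Y\leq\hat{Y}\leq S$ is shorter because it reuses Lemma \ref{qlemm1} wholesale and avoids any new algebra; your argument is self-contained, makes Lemma \ref{qlemm2} independent of Lemma \ref{qlemm1} for parts (b) and (c), and exposes explicitly where the frozen derivative enters the sign bookkeeping. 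Both are valid proofs.
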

\begin{proof}
First, because $0\leq N\leq X$, we get $$-[(N^T\otimes A+I\otimes AN)+I\otimes B]\geq -[(X^T\otimes A+I\otimes AX)+I\otimes B]$$. From (iii) and Lemma \ref{qyubei2} we know $\mathcal{Q}^{'}_{N}$ is invertible and the matrix $Y$ is well defined such that $0\leq X \leq Y$. Let
\begin{equation*}
    \hat{Y}=X-(\mathcal{Q}^{'}_{X})^{-1}\mathcal{Q}(X),
\end{equation*}
we have $\hat{Y} \geq Y$ from Lemma \ref{qyubei2}. Note that $\hat{Y}\leq S$ by Lemma \ref{qlemm1}, so we have proved (b) $0\leq Y\leq S$. Note that $$-[(\hat{Y}^T\otimes A+I\otimes A\hat{Y})+I\otimes B]$$is a nonsingular M-matrix by Lemma \ref{qlemm1} and $\hat{Y} \geq Y$, we have $-[(Y^T\otimes A+I\otimes AY)+I\otimes B]$ is a nonsingular M-matrix by Lemma \ref{qyubei2}. Last, from Taylor formula and $\mathcal{Q}^{'}_{N}(Y-X)+\mathcal{Q}(X)=0$, we have
   \begin{eqnarray*}
   \nonumber \mathcal{Q}(Y)&=&\mathcal{Q}(X)+\mathcal{Q}^{'}_{X}(Y-X)+\frac{1}{2}\mathcal{Q}^{''}_X(Y-X,Y-X)\\
 \nonumber &=&\mathcal{Q}(X)+\mathcal{Q}^{'}_{N}(Y-X)+(\mathcal{Q}^{'}_{X}-\mathcal{Q}^{'}_{N})(Y-X)+\frac{1}{2}\mathcal{Q}^{''}_X(Y-X,Y-X)\\
\nonumber &=&(\mathcal{Q}^{'}_{X}-\mathcal{Q}^{'}_{N})(Y-X)+\frac{1}{2}\mathcal{Q}^{''}_X(Y-X,Y-X)\\
\nonumber &=&\mathcal{Q}^{''}_{X}(X-N,Y-X)+\frac{1}{2}\mathcal{Q}^{''}_X(Y-X,Y-X)\\
      \nonumber &=& A(X-N)(Y-X)+A(Y-X)(X-N)+A(Y-X)^2 \\
      \nonumber &\geq &0.
     \end{eqnarray*}
\end{proof}
Using Lemma \ref{qlemm2}, we can arrive at the following monotone convergence result of Newton-Shamanskii method for QME \eqref{qme}.
\begin{theorem}\label{qthm1}
Suppose that a matrix $X_0$ is such that
\begin{itemize}
  \item [(i)] $\mathcal{Q}(X_0)\geq 0$,
  \item [(ii)] $0\leq X_0\leq S$,
  \item [(iii)]$-[(X_0^T\otimes A+I\otimes AX_0)+I\otimes B]$ is a nonsingular M-matrix.
\end{itemize}
Then the Newton-Shamanskii algorithm (\ref{qlikea}) (\ref{qlikeb}) (\ref{qlikec}) generates a sequence $\{X_k\}$
such that $X_k \leq X_{k+1}\leq S$ for all $k \geq 0$, and $ \lim_{k \to \infty} X_k=S$.
\end{theorem}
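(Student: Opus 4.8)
The plan is to run a two-level induction driven entirely by Lemma \ref{qlemm2}, and then to close with a monotone-limit plus minimality argument. The key observation is that a single outer iteration of the Newton-Shamanskii scheme is nothing but a succession of steps of the type covered by Lemma \ref{qlemm2}, all using the \emph{same} frozen derivative $\mathcal{Q}'_{X_k}$; that is, with $N=X_k$ held fixed while the evaluation point moves through $X_{k,0},X_{k,1},\ldots$

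First I would prove by induction on $k$ that every outer iterate $X_k$ satisfies the three hypotheses (i)--(iii) of the theorem. The base case $k=0$ is the assumption on $X_0$. For the inductive step, assume $X_k$ satisfies (i)--(iii) and run an inner induction on $s$ showing that the inner iterates are well defined, form an increasing chain
\[
X_k \le X_{k,0} \le X_{k,1} \le \cdots \le X_{k,n_k} = X_{k+1} \le S,
\]
and each satisfies (i)--(iii). The base of the inner induction, $X_{k,0}=X_k-(\mathcal{Q}'_{X_k})^{-1}\mathcal{Q}(X_k)$, is exactly the full Newton step, so Lemma \ref{qlemm1} (equivalently Lemma \ref{qlemm2} with $N=X=X_k$) delivers $X_k\le X_{k,0}\le S$, $\mathcal{Q}(X_{k,0})\ge 0$, and the M-matrix property at $X_{k,0}$. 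For $s\ge 1$ I apply Lemma \ref{qlemm2} with $X=X_{k,s-1}$ and $N=X_k$: the hypothesis $0\le N\le X$ reads $0\le X_k\le X_{k,s-1}$, which is exactly what the chain built so far provides, and the lemma yields $X_{k,s-1}\le X_{k,s}\le S$, $\mathcal{Q}(X_{k,s})\ge 0$, and the M-matrix property at $X_{k,s}$. This both extends the inner chain and shows $X_{k+1}=X_{k,n_k}$ again satisfies (i)--(iii), completing the outer induction and establishing $X_k\le X_{k+1}\le S$ for all $k$.

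With monotonicity and the upper bound $S$ in hand, the full sequence of all iterates is entrywise nondecreasing and bounded above, hence converges to a limit $X^*$ with $0\le X^*\le S$; in particular the subsequences $\{X_k\}$ and $\{X_{k,0}\}$ both converge to $X^*$, since $X_{k,0}$ is the term of the sequence immediately following $X_k$. It remains to identify $X^*$ with $S$. For this I would return to the Newton-step identity $\mathcal{Q}'_{X_k}(X_{k,0}-X_k)=-\mathcal{Q}(X_k)$. Because $X_{k,0}-X_k\to 0$ and the entries of $X_k$ remain in the bounded box $0\le X_k\le S$, the linear maps $\mathcal{Q}'_{X_k}$ are uniformly bounded, so $\mathcal{Q}'_{X_k}(X_{k,0}-X_k)\to 0$ and therefore $\mathcal{Q}(X_k)\to 0$. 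By continuity of $\mathcal{Q}$ this gives $\mathcal{Q}(X^*)=0$, so $X^*$ is a nonnegative solution of \eqref{qme}; minimality of $S$ forces $S\le X^*$, and with $X^*\le S$ we conclude $X^*=S$.

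The routine parts (the two inductions and the Kronecker-product bookkeeping) are absorbed into the invocations of Lemmas \ref{qlemm1} and \ref{qlemm2}, so the one place that genuinely needs care is the final limit: one must upgrade the immediate inequality $\mathcal{Q}(X^*)\ge 0$ (which follows from (i) in the limit) to the equality $\mathcal{Q}(X^*)=0$. The leverage is precisely that each outer sweep begins with a \emph{full} Newton step, so the increment $X_{k,0}-X_k$ controls $\mathcal{Q}(X_k)$ through the frozen derivative. I would therefore state explicitly the uniform boundedness of $\mathcal{Q}'_{X_k}$ on $\{0\le X\le S\}$, since that is what legitimizes sending the product $\mathcal{Q}'_{X_k}(X_{k,0}-X_k)$ to zero.
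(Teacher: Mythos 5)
Your proposal is correct and follows essentially the same route as the paper: a two-level induction in which each inner step is an application of Lemma \ref{qlemm2} with the frozen point $N=X_k$, yielding the monotone chain bounded by $S$, followed by passing to the limit and using minimality of $S$. Your treatment of the final step is in fact slightly more careful than the paper's terse ``let $i\to\infty$'' --- you make explicit that the uniform boundedness of $\mathcal{Q}'_{X_k}$ on $\{0\le X\le S\}$ is what converts $X_{k,0}-X_k\to 0$ into $\mathcal{Q}(X_k)\to 0$ --- but the underlying argument is the same.
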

\begin{proof}
We prove the theorem by mathematical induction. From Lemma \ref{qlemm2}, we have
\begin{equation*}
    X_0\leq X_{0,0}\leq \cdots \leq X_{0,n_0}=X_1 \leq S,
\end{equation*}
\begin{equation*}
    \mathcal{Q}(X_1)\geq 0,
\end{equation*}
and know that
\begin{equation*}
    -[(X_1^T\otimes A+I\otimes AX_1)+I\otimes B]
\end{equation*}
 is a nonsingular M-matrix.
Assume
\begin{equation*}
    \mathcal{Q}(X_i)\geq 0,
\end{equation*}
\begin{equation*}
    X_0\leq X_{0,0}\leq \cdots \leq X_{0,n_0}=X_1 \leq \cdots \leq X_{i-1,n_{i-1}}=X_{i} \leq S,
\end{equation*}
and
$-[(X_i^T\otimes A+I\otimes AX_i)+I\otimes B]$ is a nonsingular M-matrix.
Again by Lemma \ref{qlemm2} we have
\begin{equation*}
    \mathcal{Q}(X_{i+1})\geq 0,
\end{equation*}
\begin{equation*}
    X_i \leq X_{i,0}\leq \cdots \leq X_{i,n_i}=X_{i+1}\leq S,
\end{equation*}
and $-[(X_{+1}i^T\otimes A+I\otimes AX_{i+1})+I\otimes B]$ is a nonsingular M-matrix.
Therefore we have proved inductively the sequence $\{X_k\}$ is monotonically increasing and bounded above by $S$. So it has a limit $X_*$ such that $X_* \leq S$.
Let $i\rightarrow \infty$ in $X_{i+1}\geq X_{i,0}=X_i-(\mathcal{Q}^{'}_{X_i})^{-1}\mathcal{Q}({X_i})\geq 0$, we see that $\mathcal{Q}({X_*})=0$. Since $X_*\leq S$, and $S$ is the minimal nonnegative solution of QME (\ref{qme}), we get $X_*=S$.
\end{proof}

\section{Numerical Experiments}
We remark that the Newton-Shamanskii method differs from Newton's method in that the  Fr\'{e}chet derivative is not updated at every iteration step. That is to say the coefficient matrix pairs of the Sylvester-type equation are evaluated and reduced with QZ algorithm after several inner iteration steps. So, while more iterations will be needed than for Newton's method, the overall cost of the Newton-Shamanskii method will be less. Our numerical experiments confirm the efficiency of the Newton-Shamskii method for QME (\ref{qme}).

The numerical tests were performed on a laptop (2.4 Ghz and 2G Memory) with MATLAB R2013a. We use $X_0=0$ as the initial iteration value of the Newton-like method. As is reported in \cite{laub}, the Hesseberg-Schur method is faster than the Bartels-Stewart method when solving the general Sylvester-type equation
\begin{equation*}
    A_1XB_1^T+A_2XB_2^T=E.
\end{equation*}
So in Newton iteration we adopt the Hesseberg-Schur method for solving the Sylvester-type equation. In Newton-Shamanskii iteration, we can reuse the reduced coefficient matrix in the back substitution step when solving Sylvester-type equation, so we adopt the Bartels-Stewart method to solve the Sylvester-type equation. That is to say in the first call to QZ algorithm, we reduce $A_1$ to quasi-upper-triangular form.

About how to choose the optimal scalars $n_i$ in the Newton-like  algorithm \eqref{qlikeb},  we have no theoretical results. In our extensive numerical experiments, we update the Fr\'{e}chet derivative every $m=2$ steps.

We define the number of the evaluation of the Fr\'{e}chet derivative in the algorithm as the outer iteration steps, which is $k+1$ for an approximate solution $x_{k,l}$ in the Newton-Shamanskii algorithm.

The outer iteration steps (denoted as ``it"),  the
elapsed CPU time in seconds (denoted as ``time"), and the normalized residual
(denoted as ``NRes" ) are used to measure the
feasibility and effectiveness of our new method, where ``NRes" is
defined as
\begin{equation*}
\mbox{NRes}=\frac{\parallel A\tilde{X}^2+B\tilde{X}+C\parallel}{\parallel \tilde{X}\parallel(\parallel A\parallel \parallel \tilde{X}\parallel + \parallel B\parallel)+\parallel C\parallel},
\end{equation*}
where $\parallel\cdot\parallel$ denotes the infinity-norm of the matrix and $\tilde{X}$ is an approximate solution to the minimal nonnegative solution of (\ref{qme}).
Numerical experiments show that the Newton-Shamanskii method are more efficient than Newton method .
\begin{example} We use the example in \cite{qlrqbd,qbd} to test our algorithm. In this example we construct a quasi-birth-death problem defined by the $n \times n$
matrices $A=W$, $B=W-I$, $C =W+\delta I$, where $I$ is the identity matrix, $W$ is a matrix having null diagonal entries
and constant off-diagonal entries,and $0 < \delta< 1$ . As was observed in \cite{qlrqbd}, the rate
$\rho = p^T (B+I+2A)e$, where $p^T (A + B+I + C) = p^T$ and $p^T e = 1$, is exactly $1-\delta$. We
have tested with three different $\delta$ values and three problem sizes. Tables \ref{tab1}, Table \ref{tab2} and Table \ref{tab3}, report the results obtained with sizes $n = 20$, $n=100$ and $n = 200$, respectively.
\end{example}

\begin{table}
\begin{center}
\caption{Comparison of the numerical results when $n=20$}
\begin{tabular}{|c|c|c|c|c|} \hline

  $\delta$ & Method  & time & it & NRes \\
  5.0e-1 & Newton  & 0.013&  5 & 4.77e-16 \\
  5.0e-1 & Newton-Shamanskii & 0.009 & 3 & 2.38e-14\\
  1.0e-1 & Newton & 0.036 & 7 & 1.61e-16 \\
  1.0e-1 & Newton-Shamanskii & 0.012 & 5 & 9.25e-16 \\
  1.0e-3 & Newton& 0.043 & 13 & 8.70e-16 \\
   1.0e-3 & Newton-Shamanskii & 0.036 & 9 & 3.00e-16 \\
  \hline
\end{tabular}\label{tab1}
\end{center}
\end{table}
\begin{table}
\begin{center}
\caption{Comparison of the numerical results when $n=100$}
\begin{tabular}{|c|c|c|c|c|}\hline
  $\delta$ & Method  & time & it & NRes \\
  5.0e-1 & Newton  & 0.142&  5 & 1.24e-15 \\
  5.0e-1 & Newton-Shamanskii & 0.110 & 3 & 2.50e-14\\
  1.0e-1 & Newton & 0.190 & 7 & 1.21e-15 \\
  1.0e-1 & Newton-Shamanskii & 0.168 & 5 & 1.60e-15 \\
  1.0e-3 & Newton& 0.444 & 13 & 1.60e-15\\
   1.0e-3 & Newton-Shamanskii & 0.359 & 9 & 6.14e-16 \\
  \hline
\end{tabular}\label{tab2}
\end{center}
\end{table}
\begin{table}
\begin{center}
\caption{Comparison of the numerical results when $n=200$}
\begin{tabular}{|c|c|c|c|c|}\hline
  $\delta$ & Method  & time & it & NRes \\
  5.0e-1 & Newton  & 1.026&  5 & 9.40e-15 \\
  5.0e-1 & Newton-Shamanskii & 0.746 & 3 & 2.34e-14\\
  1.0e-1 & Newton & 1.433& 7 & 2.18e-15 \\
  1.0e-1 & Newton-Shamanskii & 1.200 & 5 & 1.25e-15 \\
  1.0e-3 & Newton& 4.798& 13 & 5.64e-15\\
   1.0e-3 & Newton-Shamanskii & 4.271 & 9 & 2.50e-15\\
  \hline
\end{tabular}\label{tab3}
\end{center}
\end{table}

\section{Conclusions}
In this paper, we apply the Newton-Shamanskii method to the quadratic matrix equation arising from the analysis of quasi-birth-death processes. The convergence analysis shows that this method is feasible and the minimal nonnegative solution of the quadratic matrix equation can be obtained. Numerical experiments show that the Newton-Shamanskii method outperforms Newton method.

\end{document}